\newcommand{\R}{\mathbb R}
\newcommand{\sgn}{\text{sgn}}
\newtheorem{theorem}{Theorem}[section]
\newtheorem{remark}[theorem]{Remark}
\newtheorem{corollary}[theorem]{Corollary}
\newtheorem*{TB}{Theorem B}
\newtheorem*{LA}{Lemma A}
\numberwithin{equation}{section}
\begin{document}
\title[Unique continuation]{On the unique continuation of solutions to non-local non-linear dispersive  equations}
\author{C. E. Kenig}
\address[C. E. Kenig]{Department of Mathematics\\University of Chicago\\Chicago, Il. 60637 \\USA.}
\email{cek@math.uchicago.edu}
\author{D. Pilod}
\address[D. Pilod]{Department of Mathematics\\University of Berger\\Postbox 7800\\ 5020 Berger\\Norway.}
\email{Didier.Pilod@uib.no}

\author{G. Ponce}
\address[G. Ponce]{Department  of Mathematics\\
University of California\\
Santa Barbara, CA 93106\\
USA.}
\email{ponce@math.ucsb.edu}

\author{L. Vega}
\address[L. Vega]{UPV/EHU\\Dpto. de Matem\'aticas\\Apto. 644, 48080 Bilbao, Spain, and Basque Center for Applied Mathematics,
E-48009 Bilbao, Spain.}
\email{luis.vega@ehu.es}

\keywords{Nonlinear dispersive equation,  non-local operators }
\subjclass{Primary: 35Q55. Secondary: 35B05}

\begin{abstract} 
We prove unique continuation properties  of solutions to a large class of nonlinear, non-local dispersive equations. The goal is to show that if $u_1,\,u_2$ are two 
suitable solutions of the equation defined in $\mathbb R^n\times[0,T]$ such that for some non-empty open set $\Omega\subset \mathbb R^n\times[0,T]$, $u_1(x,t)=u_2(x,t)$ for $(x,t)
\in \Omega$, then $u_1(x,t)=u_2(x,t)$ for any $(x,t)\in\mathbb R^n\times[0,T]$. 
The proof is based on  static arguments. More precisely, the main ingredient in the proofs will be  the  unique continuation properties for fractional powers of the Laplacian  established by Ghosh, Salo and Ulhmann in \cite{GhSaUh}, and some extensions obtained here.

\end{abstract}

\maketitle

\section{Introduction}

This work continues the study initiated in \cite{KPV19} concerning  unique continuation properties of solutions to nonlinear non-local dispersive equations.
Roughly, the aim is to prove that if $\,u_1,\,u_2$ are two suitable solutions  of a nonlinear dispersive model for $(x,t)\in\mathbb R^n\times[0,T]$
such that for some open non-empty set $\Omega\subset \mathbb R^n\times [0,T]$ 
\begin{equation}
\label{1}
u_1(x,t)=u_2(x,t),\;\;\;\;\;(x,t)\in \Omega,
\end{equation}
then $\,u_1(x,t)=u_2(x,t)$ for all $(x,t)\in\mathbb R^n\times [0,T]$.

As in \cite{KPV19}  the equations to be examined here have their dispersive relation described by a non-local differential operator. 

Our original motivation is to establish the desired result in real solutions for the so called generalized dispersion  Benjamin-Ono (BO) equation whose initial value problem (IVP) can be written as
\begin{equation}\label{AA}
\begin{cases}
\partial_tu-\partial_xD_x^{\alpha}u +u\partial_xu=0, \hskip5pt \;x, t\in\R, \;\;\;\alpha\in (-1,2)-\{0, 1\},\\
u(x,0)=u_0(x),
\end{cases}
\end{equation}
where in any dimension $n$ the operator $D_x^{\alpha}$ is defined for $\alpha\in(0,2)$ as
\begin{equation}
\begin{aligned}
\label{2}
D_x^{\alpha}(x) &= (-\Delta)^{\alpha/2}f(x) = ((2\pi |\xi|)^{\alpha} \widehat{f})^{\lor}(x)\\
&=\lim_{\epsilon\downarrow 0}\frac{1}{c_{n,\alpha}}\int_{|y|\geq \epsilon}\frac{f(x+y)-f(x)}{|y|^{n+\alpha}}dy,
\end{aligned}
\end{equation}
and for $\alpha\in(-n,0)$ as 
\begin{equation}
\begin{aligned}
\label{2b}
D_x^{\alpha}(x)& = (-\Delta)^{\alpha/2}f(x) = ((2\pi |\xi|)^{\alpha} \widehat{f})^{\lor}(x)\\
&=\frac{1}{c_{n,\alpha}}\int_{\R^n}\frac{f(x+y)}{|y|^{n+\alpha}}dy,
\end{aligned}
\end{equation}
with $$c_{n,\alpha}=\pi^{n/2} 2^{-\alpha}\,\Gamma(-\alpha/2)/\Gamma((n+\alpha)/2),$$ see for example \cite{St}.
 
In \eqref{AA} the limiting cases : $\alpha=2$ corresponds to the Korteweg-de Vries (KdV)  equation \cite{KdV}, $\alpha=1$ corresponds to the BO equation \cite{Be}, \cite{On}, $\alpha=0$, after a change of variable, coincides with the inviscid Burgers' equation which is hyperbolic, and  $\alpha=-1$ is the so called Hilbert-Burgers (HB) equation, see \cite{BiHu} and \cite{HITW}.

The KdV equation and the BO  equation arise  both as mathematical models for the unidirectional propagation of long waves  and in inverse scattering theory. They are completely integrable (see \cite{GGKM} and \cite{AbSe}) the only ones in the family $\alpha\in [-1,\infty)$. In particular, their solutions satisfy infinitely many conservations laws. 

The general case of the equation in \eqref{AA} appears as a model for vorticity waves in the coastal zone, see \cite{ShVo}.

Formally, real valued solutions of \eqref{AA} satisfy three conservation laws:
\begin{equation}
\label {CL}
\begin{aligned}
I_1(u)&=\int_{-\infty}^{\infty}u(x,t)dx,\;\;\;I_2(u)=\int_{-\infty}^{\infty}u^2(x,t)dx,\\
I_3(u)&=\int_{-\infty}^{\infty} ((D_x^{\alpha/2}u)^2-\frac{1}{3}u^3)(x,t)dx.
\end{aligned}
\end{equation}

The well-posedness of the IVP \eqref{AA} has been extensively considered. For the case $\alpha\in(1,2)$ we refer to  \cite{GiVe}, \cite{FLP}, \cite{CoKeSt}, \cite{Gu}, \cite{MoSaTz}, \cite{MoRi}, \cite{MoVe},  \cite{HIKK} and references therein, and for  $\alpha\in(0,1)$ we refer to \cite{LiPiSa}, \cite{MoPiVe} and references therein, (for the case $\alpha=1$ see the reference  \cite{KPV19}).

In \cite{HIKK} it was shown that the IVP \eqref{AA} with $\alpha\in (1,2)$ is globally well-posed in $L^2(\mathbb R)$.
In \cite{MoPiVe} it was established that the IVP \eqref{AA} with $\alpha\in(0,1)$ is locally well-posed in $H^s(\mathbb R),\;s>3/2-5\alpha/4$ , and globally well-posed in $H^{\alpha/2}(\mathbb R)$ if $\alpha>6/7$. 

In the case $\alpha\in (-1,0)$ the local well posedness in $H^s(\R)$ with $ s>3/2$ follows by a standard argument based on energy estimates.

Our first result  is the following :

\begin{theorem}\label{A1}
Let $\,\alpha\in (-1,2)-\{0, 1\}$. Let $\,u_1,\,u_2$ be two real solutions of the IVP \eqref{AA} such that
\begin{equation}
\label{class1}
u_1,\,u_2\in C([0,T]:H^{s}(\mathbb R))\cap C^1([0,T]:H^{s'}(\mathbb R)),
\end{equation}
with $s>\max\{\alpha+1; 3/2\}$ and  $s'=min\{s-(\alpha+1);s-1\}$. Moreover, if $\alpha\in (-1,-1/2]$ assume that
\begin{equation}\label{excep}
x\partial_xu_1,\;x\partial_xu_2\in L^{\infty}([0,T]:L^2(\mathbb R)).
\end{equation} 
If there exits a non-empty open set $\Omega\subset R\times[0,T]$ such that
\begin{equation}
\label{con1}
u_1(x,t)=u_2(x,t),\;\;\;\;\;(x,t)\in \Omega,
\end{equation}
then $\,u_1(x,t)=u_2(x,t)$ for all $(x,t)\in\mathbb R\times [0,T]$

\end{theorem}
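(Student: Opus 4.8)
The strategy is a static one: fix the difference $w = u_1 - u_2$ and derive, from the fact that $w$ vanishes on an open set $\Omega$, a statement about a fractional Laplacian that the Ghosh–Salo–Uhlmann unique continuation principle (and the extensions indicated in the abstract) forbids unless $w \equiv 0$. First I would localize in time: since $\Omega$ is open, there is some $t_0 \in [0,T]$ and a non-empty open set $\mathcal O \subset \mathbb R$ with $w(x,t_0) = 0$ for $x \in \mathcal O$, and moreover $\partial_t w(\cdot, t_0)$ vanishes on $\mathcal O$ as well (shrinking $\Omega$ if needed, all $t$-derivatives that the regularity class \eqref{class1} allows vanish there). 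Subtracting the two copies of \eqref{AA} gives, at time $t_0$,
\begin{equation}
\label{plan-eq}
\partial_x D_x^{\alpha} w = \partial_t w + \partial_x\!\left(\tfrac{1}{2}(u_1+u_2)\,w\right),
\end{equation}
and on the open set $\mathcal O$ the right-hand side vanishes identically (the first term because $\partial_t w = 0$ on $\mathcal O$, the second because $w = 0$ there forces the product and hence its derivative to vanish on the open set). Thus $\partial_x D_x^{\alpha} w(\cdot,t_0) = 0$ on $\mathcal O$, and since $w = 0$ on $\mathcal O$ we also have, after integrating in $x$, that $D_x^{\alpha} w(\cdot,t_0)$ is constant on each component of $\mathcal O$; the decay coming from $w \in H^s$ (and from \eqref{excep} in the exceptional range) pins this constant to be $0$.

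At this point we are in the hypotheses of a unique continuation statement for $D_x^{\alpha} = (-\Delta)^{\alpha/2}$: both $w$ and $D_x^{\alpha} w$ vanish on the open set $\mathcal O$, which by \cite{GhSaUh} (for $\alpha \in (0,2)$, so $\alpha/2 \in (0,1)$) forces $w(\cdot,t_0) \equiv 0$ on all of $\mathbb R$. For $\alpha \in (-1,0)$ the operator $D_x^{\alpha}$ has a \emph{negative} order, so one cannot quote \cite{GhSaUh} directly; here I would use the ``extensions obtained here'' alluded to in the abstract — rewriting $D_x^{\alpha} w = D_x^{\alpha-2}(D_x^2 w) = -D_x^{\alpha-2}\partial_x^2 w$, or equivalently working with $v = D_x^{\alpha} w$ which satisfies $D_x^{-\alpha} v = w$ with $-\alpha \in (0,1)$, so that the pair $(v, D_x^{-\alpha}v)$ vanishes on $\mathcal O$ and the same GSU-type principle applies to conclude $v \equiv 0$, hence $w(\cdot,t_0) \equiv 0$. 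The role of the extra assumption \eqref{excep} for $\alpha \in (-1,-1/2]$ is to guarantee enough integrability/decay of $w$ and $D_x^{\alpha}w$ (the kernel $|y|^{-(1+\alpha)}$ in \eqref{2b} is barely integrable at infinity when $\alpha$ is close to $-1$) so that the weighted Sobolev framework of the unique continuation theorem is genuinely available; I would check that $w(\cdot,t_0), D_x^{\alpha}w(\cdot,t_0)$ lie in the requisite $H^r$ space with $r$ in the admissible range, using the product estimates and the gain/loss of derivatives recorded in \eqref{class1}.

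Having obtained $w(\cdot,t_0) \equiv 0$ on $\mathbb R$ at the single time $t_0$, the final step is to propagate this to all of $[0,T]$. This is the cleanest part: $w(\cdot,t_0) = 0$ is a trivial Cauchy datum, and uniqueness for the IVP \eqref{AA} in the class \eqref{class1} — which holds by the well-posedness theory cited in the introduction (\cite{HIKK}, \cite{MoPiVe}, and the standard energy-method argument for $\alpha \in (-1,0)$), and for which one runs the usual $L^2$ energy estimate on $w$ after commuting $D_x^{\alpha}$ past the nonlinearity — gives $w \equiv 0$ on $\mathbb R \times [0,T]$.

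The main obstacle is the middle step for negative $\alpha$: the unique continuation principle of \cite{GhSaUh} is stated for $(-\Delta)^{s}$ with $s \in (0,1)$, and for $\alpha \in (-1,0)$ we need a version valid for a negative-order operator, or equivalently we must transfer the vanishing of $(w, D_x^{\alpha} w)$ into the vanishing of $(D_x^{\alpha} w, D_x^{-\alpha}(D_x^{\alpha} w))$ in a function space where the GSU argument (Caffarelli–Silvestre extension plus Carleman estimate) still runs. Making this transfer rigorous — controlling the decay of $D_x^{\alpha} w$ so that it sits in the correct weighted Sobolev class, which is exactly where \eqref{excep} enters — is the delicate point; everything else (the algebra in \eqref{plan-eq}, the time localization, and the concluding uniqueness argument) is routine.
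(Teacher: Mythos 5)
Your overall architecture (reduce to a fixed time slice $t_0$, deduce from the equation that a pair of quantities vanishes on an open interval, invoke the Ghosh--Salo--Uhlmann principle together with a negative-order extension, use \eqref{excep} for integrability, and finally propagate from $t_0$ by uniqueness of the IVP) is the same as the paper's. But there is a genuine gap at the central step: from $\partial_x D_x^{\alpha}w(\cdot,t_0)=0$ on a component $(a,b)$ of $\mathcal O$ you can only conclude that $D_x^{\alpha}w(\cdot,t_0)$ equals \emph{some} constant $c$ on $(a,b)$, and the decay of $w$ or of $D_x^{\alpha}w$ at infinity says nothing about the value of that constant on a bounded interval; an $L^2$ (or decaying) function can perfectly well be identically $c\neq 0$ on $(a,b)$. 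So your claimed vanishing of the pair $\bigl(w,D_x^{\alpha}w\bigr)$ on $\mathcal O$ is not established, and the application of Theorem B to $w$ does not go through as written. The repair is exactly the paper's device: apply the unique continuation principle to $f=\partial_x w(\cdot,t_0)$ rather than to $w$. Indeed, on $(a,b)$ one has directly $f=\partial_x w=0$ (since $w$ vanishes on an open set) and $D_x^{\alpha}f=\partial_x D_x^{\alpha}w=0$ from the equation, with no integration and no undetermined constant; Theorem B and Remark \ref{rem1} then give $\partial_x w(\cdot,t_0)\equiv 0$, hence $w(\cdot,t_0)\equiv 0$ because $w(\cdot,t_0)\in L^2$.

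The same choice also resolves a secondary problem in your treatment of the range $\alpha\in(-1,-1/2]$. To make sense of $D_x^{\alpha}$ of negative order (the kernel in \eqref{2b}) and to place oneself in the $L^1\cap L^2$ setting of the extension in Remark \ref{rem1} (via \cite{CoMoRa}), one needs the function to which the operator is applied to lie in $L^1$. The hypothesis \eqref{excep} gives $x\partial_x u_j\in L^2$, which by Cauchy--Schwarz yields $\partial_x w(\cdot,t_0)\in L^1(\mathbb R)$ --- i.e.\ exactly the integrability of $f=\partial_x w$, not of $w$ itself; membership of $w(\cdot,t_0)$ in $L^1$ does not follow from \eqref{class1} and \eqref{excep}. (For $\alpha\in(-1/2,0)$ the paper instead uses Hardy--Littlewood--Sobolev to define $D_x^{\alpha}f\in L^p$, $1/p=1/2+\alpha$, for $f\in L^2$, and the duality extension of Theorem B.) Your ``swap'' idea $v=D_x^{\alpha}w$, $D_x^{-\alpha}v=w$ is in the spirit of the paper's Remark \ref{rem1}, but it again presupposes the vanishing of $D_x^{\alpha}w$ on $\mathcal O$, which is the unproven point above. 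Your final step --- concluding from $w(\cdot,t_0)\equiv 0$ via uniqueness in the class \eqref{class1} --- is fine and is what the paper leaves implicit.
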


\begin{remark} 
\label{111}

(i) For the value $\alpha=2$, i.e. the KdV equation, the result in Theorem \ref{A1} was proved by Saut-Scheurer in \cite{SaSc}. In this case,  the operator modeling the dispersion relation is local,  and the proof is based on appropriate Carleman estimates. 

In the cases $\alpha=\pm 1$, i.e. for the BO and HB  equations resp., the result in Theorem \ref{A1} was recently obtained in \cite{KPV19}. 

In the case $\alpha=0$ the result fails. 

\vskip.1in
(ii) The hypothesis in \eqref{class1} guarantees that $\partial_xu_j(\cdot,t),\,j=1,2$, $\,t\in[0,T]$ is a continuous function and that 
$ \partial_tu(\cdot,t)\in L^2(\mathbb R),\,j=1,2$ for any $\,t\in[0,T]$. This is satisfied if $u_1(x,0), \,u_2(x,0)\in H^s(\mathbb R)$, i.e. the solution flow preserves (locally in time) the class $H^s(\mathbb R)$ with $s>max\{\alpha+1; 3/2\}$.

The hypothesis \eqref{excep} implies that for any $t\in [0,T]$ one has that $\partial_x u_1(t),\,\partial_xu_2(t)\in L^1(\mathbb R)$ which is essential in the proof of Theorem \ref{A1} for the case $\alpha\in (-1,-1/2]$. Property \eqref{excep} is ensured if one assumes that $$x \partial_xu_1(x,0), \,x \partial_xu_2(x,0)\in L^2(\mathbb R).$$ This will be proved in the appendix (section 4).

\vskip.1in
(iii) It will be clear from our proof below that the result in Theorem \ref{A1} extends to the equation in \eqref{AA} with a more general non-linearity. In fact, Theorem \ref{A1}  applies to any pair of appropriate  solutions $u_1,\,u_2$ of the IVP associated to  the equation
\begin{equation}
\label{gen1}
\partial_tu-\partial_xD_x^{\alpha}u +F(u,..., \partial_x^ju)=0, \hskip5pt \;x, t\in\R,
\end{equation}
with $\alpha\in [-1,\infty)-2\mathbb Z$, $j\in\mathbb N$ and $F(\cdot)$ a regular enough function representing the non-linearity.

\vskip.1in

(iv) For some non-local models, our version of unique continuation is too strong, and it is only realized by assuming that $u_2\equiv 0$.   This is the case of the IVP associated to the Camassa-Holm equation \cite{CaHo}
\begin{equation}\label{CH}
\partial_tu+ u \partial_xu+\partial_x(1-\partial_x^2)^{-1}\big(u^2+\frac{1}{2} (\partial_xu)^2\big)=0, \hskip5pt \;t,\,x\in\R,
\end{equation}
see \cite{LiPo}. This will also occur in part (ii) of Theorems \ref{A2}-\ref{A2m} below for the equation \eqref{fNLS} when a Hartree term is present.

\end{remark}

An argument similar to that used in the proof of Theorem \ref{A1} will also yield the following result :

\begin{corollary}\label{A17}
Let $\,\alpha\in (-1,2)-\{0\}$. Let $\,u $ be a real solution of the IVP \eqref{AA} in the class defined in \eqref{class1}. Moreover, if $\alpha\in (-1,-1/2]$ assume that
\begin{equation}\label{excep1}
x\partial_xu\in L^{\infty}([0,T]:L^2(\mathbb R)).
\end{equation} 
If there exit a constant $c_0\in \mathbb R$ and  a non-empty open set $\Omega\subset R\times[0,T]$ such that
\begin{equation}
\label{con1&}
u(x,t)=c_0,\;\;\;\;\;(x,t)\in \Omega,
\end{equation}
then $c_0=0$ and $\,u(x,t)=0$ for all $(x,t)\in\mathbb R\times [0,T]$.

\end{corollary}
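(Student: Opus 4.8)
The plan is to deduce Corollary \ref{A17} from Theorem \ref{A1}. Given a real solution $u$ of \eqref{AA} agreeing with the constant $c_0$ on a non-empty open set $\Omega$, the natural idea is to construct a second solution which is the constant $c_0$ everywhere, and to invoke the unique continuation statement of Theorem \ref{A1} applied to the pair $(u,c_0)$. The first point to check is that constants are (generalized) solutions of \eqref{AA}: since $\partial_x D_x^{\alpha}c_0=0$ and $c_0\partial_x c_0=0$ and $\partial_t c_0=0$, the constant function $u_2\equiv c_0$ solves the equation. Hence, once we know the pair $(u,c_0)$ fits the hypotheses of Theorem \ref{A1}, we conclude $u\equiv c_0$ on all of $\mathbb{R}\times[0,T]$.

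The next step is to reconcile the regularity class. A constant function does not lie in $H^s(\mathbb{R})$, so one cannot apply Theorem \ref{A1} verbatim; but its derivative $\partial_x c_0=0$ does lie in every Sobolev space, and the inspection of the proof of Theorem \ref{A1} — which, per Remark \ref{111}(ii) and the structure of the argument, only uses $\partial_x u_j$, $D_x^{\alpha}u_j$ (or equivalently derivatives of $u_j$) and $\partial_t u_j$ — shows it goes through for a pair of solutions one of which is constant, provided the relevant derivatives are controlled. Alternatively, and perhaps more cleanly, one sets $w:=u-c_0$. Then $w$ is real, $w\equiv 0$ on $\Omega$, and a direct computation gives the equation satisfied by $w$: since $u\partial_x u=(w+c_0)\partial_x w=w\partial_x w+c_0\partial_x w$, we get
\begin{equation}
\label{con1wEq}
\partial_t w-\partial_x D_x^{\alpha}w+w\partial_x w+c_0\partial_x w=0 .
\end{equation}
This is of the form \eqref{gen1} with the extra transport term $c_0\partial_x w$ (a first-order, regular nonlinearity in the sense of Remark \ref{111}(iii)), so the unique continuation method of Theorem \ref{A1} applies to $w$, giving $w\equiv 0$ on $\mathbb{R}\times[0,T]$, i.e. $u\equiv c_0$.

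It then remains to show $c_0=0$. This is where the conservation-law/decay structure enters: since $u(\cdot,t)\in H^{s}(\mathbb{R})$ with $s>3/2$, in particular $u(\cdot,t)\in L^2(\mathbb{R})$, a non-zero constant cannot be $L^2$ on $\mathbb{R}$ unless it vanishes; hence $c_0=0$ and $u\equiv 0$. (For $\alpha\in(-1,-1/2]$ the hypothesis \eqref{excep1} is the analogue of \eqref{excep} needed to run the argument in that range, exactly as in Theorem \ref{A1}.) The main obstacle I anticipate is the bookkeeping to make precise that the proof of Theorem \ref{A1} tolerates the additional linear transport term $c_0\partial_x w$ and the fact that one "solution" is merely constant — i.e. verifying that no step of that proof secretly requires both solutions to be genuinely $H^s$ and decaying; once that is granted, the rest is immediate, and the final step $c_0=0$ is forced by $L^2$-integrability.
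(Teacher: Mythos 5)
Your argument is, in substance, the paper's own proof: since $\partial_x w=\partial_x u$, subtracting $c_0$ and invoking the generalized form \eqref{gen1} adds nothing beyond what the paper does, namely run the Theorem \ref{A1} argument directly on the equation \eqref{AA} --- on $\Omega$ all the local terms vanish, so $D_x^{\alpha}\partial_x u=0$ there, and Theorem B with Remark \ref{rem1} (using the $L^1\cap L^2$ information coming from \eqref{excep1} when $\alpha\in(-1,-1/2]$) applied to $f=\partial_x u(\cdot,t_0)$ gives $\partial_x u(\cdot,t_0)\equiv 0$.

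Two points in your write-up do need fixing. First, the claim that the Theorem \ref{A1} machinery yields ``$w\equiv 0$ on $\mathbb R\times[0,T]$'' \emph{before} you know $c_0=0$ is not justified as stated: the final step of that machinery (a function with vanishing derivative which lies in $L^2$ must be zero) is precisely the step that fails for $w=u-c_0$, which is not in $L^2(\mathbb R)$ when $c_0\neq 0$ --- the very obstruction you raised against the constant ``solution'' reappears here. The correct order is: $\partial_x u(\cdot,t_0)\equiv 0$ makes $u(\cdot,t_0)$ constant; since $u(\cdot,t_0)\in H^s(\mathbb R)\subset L^2(\mathbb R)$ this constant is $0$, and since $u=c_0$ on the (non-empty, open) $t_0$-slice of $\Omega$, one gets $c_0=0$ first; then uniqueness for the IVP in the class \eqref{class1}, run forwards and backwards from $t_0$, gives $u\equiv 0$ on $\mathbb R\times[0,T]$ (the same implicit uniqueness step that closes the proof of Theorem \ref{A1}). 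Equivalently, one may note that $w(\cdot,t_0)$ is constant and vanishes on the open slice, hence is identically zero. Second, the corollary allows $\alpha=1$, which the statement of Theorem \ref{A1} excludes; so you must appeal to the proof (Theorem B is valid at $\alpha=1$, or alternatively Lemma A) rather than to Theorem \ref{A1} as a black box. With these adjustments your outline coincides with the paper's argument.
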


\begin{remark}  (i) By working with $\partial_xu$ instead of $u$ the result in Corollary \ref{A17} extends to the end-point cases $\alpha=2$ (KdV) and $\alpha=-1$ (HB),
for  appropriate solutions, using the arguments in \cite{SaSc} and \cite{KPV19}, respectively.

(ii) In the case $\alpha=2$ (KdV) under more restricted hypothesis stronger results are known. More precisely, in \cite{Ta} it was shown that the solution of the IVP for the KdV equation corresponding to data 
$u_0\in L^2(\mathbb R)$ with the additional decay assumption
$$
\int_{-\infty}^{\infty} (1+|x|)|u_0(x)|dx +\int_{0}^{\infty} e^{\delta |x|^{1/2}} \,|u_0(x)|dx<\infty \;\;\;\text{for some} \;\;\delta>0,
$$
 becomes analytic with respect to the variable $x$ for all $t>0$. This result is based on the inverse scattering method and it is unknown for other nonlinearities. 

\end{remark}
\vskip.1in

 In the case of $\alpha=\pm 1$, i.e. for BO and HB  equations, respectively, the proof of Theorem \ref{A1} in \cite{KPV19} is based on the identities 
$$
(-\partial_x^2)^{1/2}=D_x= \partial_x \mathcal H\;\;\;\;\;\;\;\text{and} \;\;\;\;\;\;\;\partial_x D_x^{-1}=\mathcal H,
$$
 where $\mathcal H$ is the Hilbert transform
$$
\mathcal Hf(x)=\frac{1}{\pi}\,\rm{p.v.}\Big(\frac{1}{x}\ast f\Big)(x)=(-i\,\sgn(\xi)\widehat{f}(\xi))^{\vee}(x),
$$
and  the following well-known unique continuation principle for the Hilbert transform :

\begin{LA}
\label{L0}
Let $f\in L^2(\R)$ be a real valued function. If there exists an open (non-empty) interval  $I\subset \R$ such that $$f(x)=\mathcal Hf(x)=0,\;\;\;\;\;\text{for a. e.}\;\;\;x\in I,
$$
then $f(x)=0$ for any  $x\in\R$.

\end{LA}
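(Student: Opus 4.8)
The plan is to prove Lemma A (the unique continuation principle for the Hilbert transform) via complex analysis, exploiting the fact that $f + i\mathcal{H}f$ is (after the right normalization) the boundary value of a function holomorphic in the upper half-plane. More precisely, writing $u = f$ and $v = \mathcal{H}f$, one knows that $F(z) := u(z) + i v(z)$ extends to a bounded-in-an-appropriate-sense holomorphic function $F$ on the upper half-plane $\C_+ = \{z : \operatorname{Im} z > 0\}$ whose nontangential boundary values on $\R$ are a.e. equal to $f(x) + i\mathcal{H}f(x)$; indeed $F$ is the Poisson/conjugate-Poisson extension of $f$, equivalently $\widehat{F}$ is supported on $[0,\infty)$ via the Paley--Wiener theorem since $f \in L^2(\R)$ places $F$ in the Hardy space $H^2(\C_+)$.

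The key steps, in order, are: first, set up the Hardy space picture rigorously — for $f \in L^2(\R)$ real, the function $F = P_y * f + i Q_y * f$ (Poisson and conjugate Poisson kernels) lies in $H^2(\C_+)$, and by Fatou's theorem its nontangential limit at a.e. $x \in \R$ is $f(x) + i \mathcal{H}f(x)$. Second, use the hypothesis: on the open interval $I$ we have $f(x) = \mathcal{H}f(x) = 0$ a.e., so the nontangential boundary values of $F$ vanish a.e. on a set of positive measure (indeed on an interval). Third, invoke the boundary uniqueness theorem for Hardy spaces (a nonzero $H^2(\C_+)$ function cannot have boundary values vanishing on a set of positive measure — this follows, e.g., from the inner-outer factorization, or from the fact that $\log|F|$ has a harmonic majorant and hence is integrable against the Poisson kernel, so $\log|F| \in L^1_{loc}$ cannot be $-\infty$ on a positive-measure set). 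Conclude $F \equiv 0$ on $\C_+$, hence $f \equiv 0$ a.e. on $\R$, and by continuity (or simply as an $L^2$ statement) $f(x) = 0$ for all $x$.

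I expect the main obstacle to be purely expository rather than mathematical: one must be careful to state the correct version of the Hardy space boundary-uniqueness theorem on the half-plane (as opposed to the disc) and to justify that the Poisson extension of $f \in L^2$ genuinely lies in $H^2(\C_+)$ with the stated boundary behavior, since $L^2$ functions need not be bounded. A clean way around any subtlety is to pass to the disc via a conformal map, or alternatively to argue directly with the analytic signal: $\widehat{f + i\mathcal{H}f} = 2\widehat{f}\,\carac_{[0,\infty)}$, so $F$ is the inverse Fourier transform of an $L^2$ function supported on a half-line, placing it in $H^2(\C_+)$ by Paley--Wiener, after which F. and M. Riesz / boundary uniqueness applies verbatim. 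An entirely real-variable alternative — which avoids Hardy space machinery and may be preferred here — is to note that $g := f + i\mathcal{H}f$ satisfies $g = 0$ on $I$, and that for such analytic-signal functions one has the representation showing $g$ extends holomorphically; then the classical fact that an $H^p$ function vanishing on a boundary interval is identically zero finishes the argument. Either route is short once the Hardy-space input is quoted.
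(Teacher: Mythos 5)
Your proposal is correct, and it is a complete proof of the quoted lemma. Note that the paper itself offers no proof of Lemma A: it is stated as a well-known fact (the one-dimensional ingredient already used in \cite{KPV19}), so there is nothing to match line by line; what the paper does provide instead is the analogous \emph{higher-dimensional} mechanism (the sketch of Theorem B via the harmonic extension, reflection across $\Theta\times\{0\}$, infinite-order vanishing and real analyticity). Your Hardy-space route --- $g=f+i\mathcal Hf$ has $\widehat g=2\widehat f\,\carac_{[0,\infty)}$, hence $g$ is the boundary value of an $H^2(\C_+)$ function by Paley--Wiener, and a nonzero $H^2$ function cannot have nontangential boundary values vanishing on a set of positive measure (since $\log|F|$ is integrable against the Poisson kernel) --- is the standard argument and is in fact slightly stronger than what is stated: it only needs $f=\mathcal Hf=0$ on a set of positive measure, not on an interval. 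A more elementary variant, closer in spirit to the paper's treatment of Theorem B, is to consider the Cauchy integral $F(z)=\int f(y)(y-z)^{-1}dy$, which is holomorphic on $\C\setminus(\R\setminus I)$ because $f$ vanishes on $I$; the Plemelj jump formulas show both boundary values of $F$ on $I$ are linear combinations of $f$ and $\mathcal Hf$, hence $F$ vanishes on $I$, the identity theorem on the connected set $\C_+\cup I\cup\C_-$ gives $F\equiv0$, and the jump formula returns $f\equiv0$; this avoids quoting the boundary-uniqueness theorem for $H^p$ but genuinely uses that $I$ is an interval. Either way your argument is sound; the only cosmetic caveat is that the conclusion $f(x)=0$ ``for any $x$'' should be read as an equality in $L^2$ (a.e.), which is how the lemma is used.
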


A key argument in the proof of Theorem \ref{A1} will be  following unique continuation result regarding the fractional powers of the Laplacian in $\mathbb R^n$ established by Ghosh-Salo-Uhlmann in \cite{GhSaUh}:

\begin{TB} [\cite{GhSaUh}]
 Let $\alpha\in (0,2)$ and $\,f\in H^s(\mathbb R^n)$ for some $\,s\in\mathbb R$. If there exists a open (non-empty) set $\Theta\subset \mathbb R^n$ such that 
\begin{equation}
\label{hyp}
(-\Delta)^{\alpha/2}f (x)=f(x)=0,\;\;\;\;\text{ in}\,\;\;\mathcal D'(\Theta),
\end{equation}
 then $f\equiv 0$ in $H^s(\mathbb R^n)$.

\end{TB}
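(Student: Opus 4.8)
The strategy I would follow is the one of Ghosh--Salo--Uhlmann: realize $(-\Delta)^{\alpha/2}$ as a weighted Dirichlet-to-Neumann map through the Caffarelli--Silvestre extension, and then invoke a unique continuation property for the associated degenerate elliptic equation. Set $\sigma=\alpha/2\in(0,1)$ and, given $f\in H^s(\mathbb R^n)$, let $w=w(x,y)$ on the half-space $\mathbb R^{n+1}_+=\{(x,y):x\in\mathbb R^n,\ y>0\}$ be the solution of
\begin{equation*}
\nabla\cdot\big(y^{1-2\sigma}\nabla w\big)=0 \quad\text{in }\ \mathbb R^{n+1}_+, \qquad w(\cdot,0)=f \ \text{ on }\ \mathbb R^n,
\end{equation*}
obtained from the explicit Poisson-type formula (and belonging to $H^1(\mathbb R^{n+1}_+,\,y^{1-2\sigma}\,dx\,dy)$ when $f\in H^\sigma$), together with the basic identity
\begin{equation*}
(-\Delta)^{\sigma}f(x)=-\,c_\sigma\lim_{y\downarrow 0}y^{1-2\sigma}\,\partial_y w(x,y),\qquad c_\sigma>0.
\end{equation*}
Since $|y|^{1-2\sigma}$ is an $A_2$ Muckenhoupt weight, while on compact subsets of $\{y>0\}$ the coefficient $y^{1-2\sigma}$ is real-analytic and the equation uniformly elliptic, $w$ is real-analytic in $\mathbb R^{n+1}_+$; this also disposes of the low-regularity range $s\in\mathbb R$, since the interior estimates are purely local.

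Next I would translate the hypothesis. On the open set $\Theta$ we are given $f=0$ and $(-\Delta)^{\sigma}f=0$ (in $\mathcal D'(\Theta)$), which by the two displayed relations says exactly that both the Dirichlet trace $w(\cdot,0)$ and the weighted co-normal derivative $\lim_{y\downarrow 0}y^{1-2\sigma}\partial_y w$ of $w$ vanish on $\Theta$. Fix a ball $B\subset\Theta$ and a half-ball $B^+_\rho\subset\mathbb R^{n+1}_+$ lying above it, and let $B_\rho=B^+_\rho\cup(B\times\{0\})\cup B^-_\rho$ be the corresponding full ball obtained by reflection. Define $\widetilde w=w$ on $\overline{B^+_\rho}$ and $\widetilde w\equiv 0$ on $B^-_\rho$. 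Because both Cauchy data of $w$ on $B\times\{0\}$ vanish, testing against $\phi\in C_c^\infty(B_\rho)$ and integrating by parts shows that $\widetilde w$ is a weak solution of $\nabla\cdot(|y|^{1-2\sigma}\nabla\widetilde w)=0$ in the whole ball $B_\rho$; in particular $\widetilde w$ is a solution of the degenerate elliptic equation on the connected open set $B_\rho$ that vanishes identically on the nonempty open subset $B^-_\rho$.

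The crux is then the unique continuation property for the Caffarelli--Silvestre operator: a solution of $\nabla\cdot(|y|^{1-2\sigma}\nabla u)=0$ on a connected open subset of $\mathbb R^{n+1}$ that vanishes on a nonempty open subset vanishes identically. Applying it to $\widetilde w$ in $B_\rho$ gives $w\equiv 0$ on $B^+_\rho$, so $w$ vanishes on a nonempty open subset of the connected open set $\mathbb R^{n+1}_+$; by the real-analyticity of $w$ in $\{y>0\}$ it follows that $w\equiv 0$ throughout $\mathbb R^{n+1}_+$, and hence $f=w(\cdot,0)\equiv 0$ in $H^s$.

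I expect the unique continuation property invoked in the last step to be the main obstacle. Away from the hyperplane $\{y=0\}$ it is classical strong unique continuation for second-order elliptic equations with smooth coefficients; but the passage through $\{y=0\}$, where the weight $|y|^{1-2\sigma}$ is singular and analyticity is unavailable, requires a Carleman estimate tailored to this $A_2$-weighted degenerate operator (as developed for it by R\"uland and by Fall--Felli, and used by Ghosh--Salo--Uhlmann), with a weight function and a spherical-harmonic decomposition in $y$ adapted to the degeneracy. A secondary, routine point is the rigorous justification of the extension problem and of the extension-by-zero step for $f$ of arbitrary Sobolev regularity, which is handled via the Poisson kernel together with interior elliptic estimates.
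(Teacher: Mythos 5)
Your proposal is correct and follows essentially the same route as the paper, which attributes Theorem B to Ghosh--Salo--Uhlmann and sketches precisely this argument: the Caffarelli--Silvestre extension with vanishing Dirichlet and weighted Neumann data over $\Theta$, the unique continuation for the degenerate $|y|^{1-2\sigma}$-weighted equation via the Carleman machinery of R\"uland (Proposition 2.2) and Fall--Felli, and then interior real analyticity of the extension to conclude $f\equiv 0$ (the $\alpha=1$ case being done by harmonic reflection). The only cosmetic differences are your extension-by-zero reformulation of the zero-Cauchy-data step and the treatment of general $s\in\mathbb R$, which the paper handles more simply by mollifying to reduce to $f\in H^{\infty}(\mathbb R^n)$.
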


\begin{remark}
\label{rem1}
 It is clear that the statement of Theorem B extends to  $\alpha\in (0,\infty)-2 \mathbb Z$, and by \lq\lq duality" to 
$\alpha \in (-n/2,\infty)-2 \mathbb Z$, (see the argument at the end of the proof of Theorem \ref{A1} in section 2). 

Moreover, if $f\in L^1(\mathbb R^n)\cap L^2(\mathbb R^n)$, then it also holds for $\alpha\in (-n,-n/2]$, see \cite{CoMoRa} for less restrictive regularity assumptions.

\end{remark}

Unique continuation for the fractional Laplacian has been studied in other works, see \cite{Ri}, \cite{Ru}, \cite{FaFe}, \cite{Yu}, \cite{CoMoRa} and references therein.

To illustrate the argument to prove Theorem B and its extensions it is convenient to examine the case $\alpha=1$ and $n\geq 1$  (when $n=1$ the result follows by Lemma A) assuming that $f\in H^{1/2}(\mathbb R^n)$.
By inspecting the elliptic extension problem
\begin{equation}
\label{simple}
\begin{aligned}
\begin{cases}
&(\Delta_x+\partial_y^2)U(x,y)=0,\;\;\;\;(x,y)\in\mathbb R^n\times(0,\infty),\\
&U(x,0)=f(x),
\end{cases}
\end{aligned}
\end{equation}
one has that
\begin{equation}
\label{def1}
(-\Delta_x)^{1/2}f(x)=-\,\partial_yU(x,0).
\end{equation}
Then, from the hypothesis \eqref{hyp} using the reflexion principle for harmonic functions one has that  $U(x,y)$ has a harmonic extension $\widetilde U(x,y)$ defined in the open set $ \mathbb R^n\times (-\infty,0)\cup \Theta\times\{0\}\cup\mathbb R^n\times(0,\infty).$ From the equation, the identity \eqref{def1}  and the hypothesis \eqref{hyp} it follows that $\tilde U(x,y)$  has zero of infinite order at any point of $\Theta\times\{0\}$. Hence, since $\widetilde U$ is real analytic, it follows that $\widetilde U(x,y)\equiv 0$ and consequently $f(x)= 0$ for $x\in\mathbb R^n$, which proves Theorem B for $\alpha=1$ under the assumption $f\in H^{1/2}(\mathbb R^n)$.

In the general case $\alpha\in(0,2)$ the proof of Theorem B, with $f\in H^{\alpha}(\mathbb R^n)$ follows by combining the ideas of Cafarelli-Silvestre  in \cite{CaSi}, with a unique continuation principle  obtained in  \cite{Ru}, 
see also \cite{FaFe} and \cite {Yu}, and some regularity results found in \cite{CaSi}, see \cite{GhSaUh}.

Further, we notice that the argument given above to prove Theorem  B in the particular  case $\alpha=1/2$ still applies if one considers $(1-\Delta)^{1/2}$ instead of $(-\Delta)^{1/2}$. More precisely, one regards the elliptic problem  
\begin{equation}
\label{simple11}
\begin{aligned}
\begin{cases}
&(\Delta_x-1+\partial_y^2)U(x,y)=0,\;\;\;\;(x,y)\in\mathbb R^n\times(0,\infty),\\
&U(x,0)=f(x),
\end{cases}
\end{aligned}
\end{equation}
to get that
\begin{equation}
\label{def11}
(1-\Delta_x)^{1/2}f(x)=-\,\partial_yU(x,0).
\end{equation}
Assuming that $f\in H^{1/2}(\mathbb R^n)$ and $\,(1-\Delta_x)^{1/2}f (x)=f(x)=0$ for $x\in \Theta$ open (non-empty) sub-set of $\R^n$  the above approach  yields the desired result $\,f(x)= 0$ for any $x\in\mathbb R^n$. 

For the general case, we shall prove the following  related version of Theorem B  :

\begin{theorem}\label{C1}
Let $\alpha\in (0,2) $ and $\,f\in H^{s}(\mathbb R^n)$ for some $s\in\mathbb R$. If there exists a open (non-empty) set $\Theta\subset \mathbb R^n$ such that 
\begin{equation}
\label{hyp2}
(1-\Delta)^{\alpha/2}f (x)=f(x)=0,\;\;\;\;\text{in}\;\;\mathcal D'(\Theta),
\end{equation}
 then $f(x)\equiv  0$.

\end{theorem}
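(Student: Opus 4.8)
The plan is to reduce Theorem~\ref{C1} to Theorem~B by using the Caffarelli--Silvestre type extension adapted to the Bessel (massive) operator $(1-\Delta)^{\alpha/2}$ rather than $(-\Delta)^{\alpha/2}$. Recall that for $\alpha\in(0,2)$, writing $\alpha=2s_0$ with $s_0\in(0,1)$, the function $f$ on $\R^n$ extends to $U(x,y)$ on $\R^{n+1}_+=\R^n\times(0,\infty)$ solving the degenerate elliptic equation
\begin{equation}
\label{ext-bessel}
\mathrm{div}_{x,y}\big(y^{1-2s_0}\nabla_{x,y}U\big)-y^{1-2s_0}U=0,\qquad U(x,0)=f(x),
\end{equation}
and one has the Dirichlet-to-Neumann identity
\begin{equation}
\label{dtn-bessel}
(1-\Delta)^{s_0}f(x)=-c_{s_0}\,\lim_{y\downarrow 0} y^{1-2s_0}\,\partial_y U(x,y)
\end{equation}
for an explicit positive constant $c_{s_0}$. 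This is the exact analogue of the extension used in \cite{CaSi} for $(-\Delta)^{s_0}$; the only change is the zeroth order term $-y^{1-2s_0}U$ coming from the mass. The hypothesis \eqref{hyp2} then says precisely that both the Dirichlet data $f$ and the (weighted) Neumann data of $U$ vanish in $\mathcal D'(\Theta)$.

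First I would record \eqref{ext-bessel}--\eqref{dtn-bessel} carefully (this is standard; it follows by writing the extension via the modified Bessel function $K_{s_0}$ in the $y$ variable, exactly as $y^{s_0}K_{s_0}$ replaces $y^{s_0}$ in the $(-\Delta)^{s_0}$ case, so no genuinely new computation is needed). Second, I would invoke the strong unique continuation principle for this class of degenerate elliptic equations with $A_2$-Muckenhoupt weight $y^{1-2s_0}$: the Cauchy data $(U|_{y=0},\,y^{1-2s_0}\partial_y U|_{y=0})$ both vanishing on the open set $\Theta$ forces $U\equiv 0$ on a neighborhood, hence on all of $\R^{n+1}_+$ by the unique continuation property across $\{y=0\}$ proved in \cite{Ru} (see also \cite{FaFe}, \cite{Yu}) together with the regularity results of \cite{CaSi}. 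The key point is that the lower-order term $-y^{1-2s_0}U$ in \eqref{ext-bessel} is harmless for this machinery: the Almgren-type frequency function / Carleman estimates used in \cite{Ru} accommodate a bounded zeroth-order perturbation, so the unique continuation conclusion is unchanged. Taking $y\downarrow 0$ then gives $f\equiv 0$.

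Alternatively — and this is probably the cleanest write-up — I would avoid re-proving a unique continuation statement and instead reduce directly to the already-established Theorem~B. The idea is that $(1-\Delta)^{\alpha/2}$ and $(-\Delta)^{\alpha/2}$ agree modulo a smoothing (in fact pseudodifferential-order $\alpha-2$, hence $<0$) operator. Concretely, one can try to deduce from \eqref{hyp2} that $(-\Delta)^{\alpha/2}g=g=0$ in $\mathcal D'(\Theta')$ for a slightly smaller open set $\Theta'\Subset\Theta$ and a suitable $g$ related to $f$; but the difficulty is that $(1-\Delta)^{\alpha/2}-(-\Delta)^{\alpha/2}$ is nonlocal, so vanishing of $(1-\Delta)^{\alpha/2}f$ on $\Theta$ does not immediately give vanishing of $(-\Delta)^{\alpha/2}f$ on any open set. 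This is exactly why the extension-problem route is preferable: it is \emph{local} in the relevant sense, the Cauchy data from \eqref{dtn-bessel} are literally the objects appearing in \eqref{hyp2}, and the reflection/analyticity or frequency-function argument runs on the extended domain just as in the massless case. Thus the main obstacle to watch is simply to confirm that the strong unique continuation across the thin set $\{y=0\}$ from \cite{Ru} is robust under adding the bounded zeroth-order term; granting that, the proof is a verbatim adaptation of the $(-\Delta)^{\alpha/2}$ argument sketched before the statement of Theorem~\ref{C1}.
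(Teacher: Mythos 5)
Your overall strategy --- pass to the extension problem for $(1-\Delta)^{\alpha/2}$ with the degenerate weight $y^{1-\alpha}$ and run a boundary unique continuation argument using \cite{Ru} --- is the same as the paper's, and your extension PDE and Dirichlet-to-Neumann identity are exactly the paper's \eqref{ellip3} and \eqref{222} (obtained there from the Stinga--Torrea formulation \cite{StTo}). The problem is at the decisive step. You dispose of the extra zeroth-order term $-y^{1-\alpha}U$ coming from the mass by asserting that the Carleman/frequency-function machinery of \cite{Ru} ``accommodates a bounded zeroth-order perturbation, so the unique continuation conclusion is unchanged.'' That is precisely the point that requires proof: Proposition 2.2 of \cite{Ru} is proved for the massless equation $\mathrm{div}_{x,y}(y^{1-\alpha}\nabla_{x,y}U)=0$, and there is no statement there you can cite for the perturbed equation; as written, your key step is a plausibility claim (``the estimates should absorb the term''), not an argument, and carrying it out would mean redoing the relevant Carleman/frequency estimates. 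You even flag this yourself as ``the main obstacle to watch,'' but you never close it, so the proof is incomplete at its core.

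The paper sidesteps this obstacle with a short trick you are missing: after establishing (by the soft, regularity-based part of Rüland's argument, which does apply to the massive equation) that $V$ vanishes to infinite order on $\Theta\times\{0\}$, it sets $\tilde V(x,t,y)=\cos(t)\,V(x,y)$ with an auxiliary variable $t$. Since $\partial_t^2\cos(t)=-\cos(t)$, the mass term is absorbed and $\tilde V$ solves the \emph{massless} degenerate equation $\Delta_{x,t,y}\tilde V+\frac{1-\alpha}{y}\,\partial_y\tilde V=0$ in $\mathbb R^{n+1}\times(0,\infty)$, with Cauchy data vanishing to infinite order on $\Theta\times(-1,1)\times\{0\}$; Proposition 2.2 of \cite{Ru} then applies verbatim in dimension $n+1$, and real analyticity of $\tilde V$ in $\{y>\epsilon\}$ gives $\tilde V\equiv 0$, hence $V\equiv 0$ and $f\equiv 0$. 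If you prefer your route you must actually prove the perturbed unique continuation statement (in the spirit of \cite{FaFe}); otherwise the dimension-augmentation trick reduces everything to the already established case. A minor additional omission: the theorem allows $f\in H^{s}(\mathbb R^n)$ for arbitrary $s\in\mathbb R$, while the extension and the Neumann trace are set up for regular $f$; the paper first reduces to $f\in H^{\infty}$ by mollifying with a compactly supported approximate identity, which preserves the hypothesis \eqref{hyp2} on a slightly smaller open set --- your write-up skips this reduction.
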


\begin{remark} 
(i) The range of  $\alpha$ in Theorem \ref{C1} can be extended to $\mathbb R-2\mathbb Z$. 

(ii) Our proof of Theorem \ref{C1} starts by using the  abstract approach in \cite{StTo} to formulate  a general setting and then uses some ideas which  reduce the proof of this theorem to that 
of Theorem B in a higher dimension. This argument is quite natural and enables us to consider the extension of this problem to fractional powers of elliptic operators with variable coefficients in $\mathbb R^n$ and $\mathbb S^n$ as well as in manifold. This will be addressed   in a forthcoming work.
\end{remark}

 Theorem B and Theorem \ref{C1} allow us to extend the unique continuation result in Theorems \ref{A1} to a large class of non-linear non-local higher dimensional dispersive models. More precisely, we consider  the general fractional Schr\"odinger equation 
 \begin{equation}
 \label {fNLS}
 i\partial_t u +(\mathcal L_m)^{\alpha/2}u+V u+(W\ast F(|u|)u+P(u,\bar{u})=0,
 \end{equation}
where $(x,t)\in \mathbb R^n\times\mathbb R$,  $\mathcal L_m=(-\Delta+m^2),\;m\geq 0$, with $\alpha\in\R-2\,\mathbb Z$ if $m>0$ and $\alpha\in (-n,\infty)-\mathbb Z$ if $m=0$, $V=V(x,t)$ is the potential energy,  $W=W(|x|)$ defines the Hartree integrand and $P(z,\bar{z})$ is a polynomial or a regular enough function with $P(0,0)=0$.

The model in \eqref{fNLS} arises in several different physical contexts, for example :

- when $m=0$, $W=P=0$, it was used in \cite{La} to describe particles in a class of  Levi stochastic processes,

- when $m>0$, $W=P=0$, it was derived as a generalized semi-relativistic (Schr\"odinger) equation, see \cite{Le} and references therein,

-  when $m=0$, $\alpha=1$, $V=W=0$, and $P(u,\bar{u})=\pm|u|^au,\,a>0,$ it is known as the half-wave equation, see \cite{BGLV}, \cite{GeLePoRa} and reference therein,

-  when $m=0$,  $V=W=0$, and $$P(z,\bar z)=c_0|z|^2z+c_1z^3+c_3z{\bar z}^2+c_3{\bar z}^3,\;\;\,c_0\in\mathbb R,\;c_1,c_2, c_3\in \mathbb C, $$it appears in \cite{IoPu} on the study of the long-time behavior of solutions to the water waves equations in $\mathbb R^2$, where $(-\partial_x^2)^{1/2}$ 
 models the dispersion relation of the linearized gravity water waves equations for one-dimensional interfaces,
 
 - when $m=1$, $V=P=0$, $F(|z|)=|z|^2$ the model arises in gravitational collapse, see \cite{ElSc}, \cite{FrLe} and references therein.

 The well-posedness of the IVP \eqref{fNLS} has been considered in several publications, see for example \cite{CHHO}, \cite{HoSi}, \cite{KLR}, \cite{Le}, \cite{BoRi} and references therein.
  
 Thus, we have the following results concerning unique continuation properties of solutions to the IVP associated to the equation \eqref{fNLS}. First, we shall consider the case $m>0$ :
 
\begin{theorem}\label{A2}
(i) Let $\,\alpha\in (-2,2)-\{0\}$ and $m>0$. Let $\,u_1,\,u_2$ be two solutions of the IVP associated to the equation  \eqref{fNLS} with $W\equiv 0$ such that
\begin{equation}
\label{class2}
u_1,\,u_2\in C([0,T]:H^{s}(\mathbb R^n))\cap C^1([0,T]:H^{s-\alpha}(\mathbb R^n)),
\end{equation}
with $s>\max\{\alpha; n/2\}.$ If there exits an open  non-empty set $\Omega\subset \mathbb R^n\times[0,T]$ such that
\begin{equation}
\label{con2}
u_1(x,t)=u_2(x,t),\;\;\;\;\;(x,t)\in \Omega,
\end{equation}
then $\,u_1(x,t)=u_2(x,t)$ for all $(x,t)\in\mathbb R^n\times [0,T]$.

\vskip.1in

(ii) For the general form of the equation \eqref{fNLS} the result in (i) still holds if one assumes that $u_2(x,t)=0$ for $(x,t)\in \mathbb R^n\times [0,T]$.

\end{theorem}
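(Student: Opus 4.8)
The plan is to reduce the problem for the evolution equation \eqref{fNLS} to a static unique continuation statement for $(\mathcal L_m)^{\alpha/2} = (1-\Delta)^{\alpha/2}$ (after the trivial rescaling $m^2 \leftrightarrow 1$), and then invoke Theorem \ref{C1}. Set $u = u_1 - u_2$. Since $\Omega$ is open and non-empty, we may fix an interior point $(x_0,t_0) \in \Omega$ and a small open ball $B \times (t_0-\delta, t_0+\delta) \subset \Omega$. On this set $u \equiv 0$, hence in particular $u(\cdot, t_0) = 0$ on $B$ and $\partial_t u(\cdot, t_0) = 0$ on $B$. The first step is to subtract the two equations and evaluate at $t = t_0$: the difference of the nonlinear terms $P(u_1,\bar u_1) - P(u_2,\bar u_2)$ and the potential term $V(u_1 - u_2)$ both vanish at $t_0$ on $B$ (the former because $P$ is a regular function vanishing together with its arguments, the latter trivially), so the equation forces
\begin{equation*}
(1-\Delta)^{\alpha/2} u(\cdot, t_0)(x) = -\,i\,\partial_t u(\cdot,t_0)(x) = 0, \qquad x \in B,
\end{equation*}
at least in the sense of distributions on $B$. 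Combined with $u(\cdot,t_0) = 0$ on $B$, the function $f := u(\cdot, t_0) \in H^s(\mathbb R^n)$ satisfies the hypotheses of Theorem \ref{C1} on the open set $\Theta = B$ (for $\alpha \in (0,2)$; for $\alpha \in (-2,0)$ one uses the extension of Theorem \ref{C1} to $\mathbb R - 2\mathbb Z$ mentioned in the remark, or the duality argument sketched at the end of the proof of Theorem \ref{A1}). We conclude $u(\cdot, t_0) \equiv 0$ on all of $\mathbb R^n$.

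The second step is to upgrade the single-time conclusion $u(\cdot,t_0) = 0$ to $u \equiv 0$ on the whole time interval. This is where the well-posedness theory enters: the IVP for \eqref{fNLS} is well-posed in the class \eqref{class2}, and in particular solutions are unique in that class. Since $u_1$ and $u_2$ are both solutions agreeing at time $t_0$, backward-and-forward uniqueness of the flow yields $u_1(\cdot,t) = u_2(\cdot,t)$ for all $t \in [0,T]$. One should note that $t_0$ may be an endpoint of $[0,T]$; since $\Omega$ is open in $\mathbb R^n \times [0,T]$ it in fact contains interior times, so this is not an issue, and the argument can alternatively be run at any interior time $t_0$.

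The regularity bookkeeping is the part requiring care rather than the structure. One must check: (a) $s > \max\{\alpha; n/2\}$ indeed forces $u(\cdot, t_0) \in H^s$ with $s$ large enough that pointwise products $P(u_1,\bar u_1) - P(u_2, \bar u_2)$ make sense and that $H^s$ is an algebra (Sobolev embedding $H^s \hookrightarrow L^\infty$ when $s > n/2$ handles this), so that the difference of nonlinearities is a genuine $H^{s'}$ function vanishing on $B$; (b) $\partial_t u(\cdot, t_0) \in H^{s-\alpha}(\mathbb R^n) \subset \mathcal D'$, so the distributional identity above is legitimate; (c) Theorem \ref{C1} is being applied with the correct Sobolev index — it allows $f \in H^s$ for \emph{any} $s \in \mathbb R$, so no loss occurs. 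The main obstacle I anticipate is handling the negative-order range $\alpha \in (-2,0)$: there $(1-\Delta)^{\alpha/2}$ is a smoothing operator and the "duality" extension of Theorem \ref{C1} must be invoked carefully — one writes $g = (1-\Delta)^{\alpha/2} f$, observes $g$ and $(1-\Delta)^{-\alpha/2} g = f$ both vanish on $B$, and applies Theorem \ref{C1} with exponent $-\alpha \in (0,2)$ to $g$; the integrability hypothesis analogous to \eqref{excep} is not needed here because $m>0$ keeps everything in $L^2$-based spaces without weights. For part (ii), with the Hartree term $W \ast F(|u|)$ present, the subtraction argument breaks down because $W \ast F(|u_1|)\,u_1 - W\ast F(|u_2|)\,u_2$ evaluated on $B$ need not vanish there — the convolution is non-local, so even though $u_1 = u_2$ on $B$, the factor $W \ast F(|u_j|)$ depends on the values of $u_j$ everywhere. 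The remedy, exactly as in Remark \ref{111}(iv) and \cite{LiPo}, is to assume $u_2 \equiv 0$ outright; then $F(|u_2|) = 0$, the Hartree term drops entirely from the $u_2$ equation, and running the same argument on $u_1$ alone (with $f = u_1(\cdot, t_0)$, noting $W \ast F(|u_1|)\,u_1 = 0$ on $B$ since $u_1 = 0$ there) gives $u_1 \equiv 0$.
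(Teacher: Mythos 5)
Your proposal is correct and follows essentially the same route as the paper: on a time slice $\Theta\times\{t_0\}\subset\Omega$ the equation forces $w=u_1-u_2$ (or $u_1$ itself in part (ii), where the factor $u_1$ kills the Hartree term locally) to satisfy $w(\cdot,t_0)=(\mathcal L_m)^{\alpha/2}w(\cdot,t_0)=0$ on $\Theta$, after which Theorem \ref{C1} (or Theorem B when $m=0$, and the duality extension for negative $\alpha$) gives $w(\cdot,t_0)\equiv 0$, and uniqueness of the flow in the class \eqref{class2} finishes the argument. You are in fact slightly more explicit than the paper about the forward-and-backward propagation from $t_0$ and about the negative-order range; the only cosmetic slip is your justification that $P(u_1,\bar u_1)-P(u_2,\bar u_2)$ vanishes on the slice, which holds simply because $u_1=u_2$ there pointwise, not because $P$ vanishes with its arguments.
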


Next, we consider the case $m=0$:

\begin{theorem}\label{A2m}
(i) Let $\,\alpha\in (0,2)$ and $m=0$. Let $\,u_1,\,u_2$ be two solutions of the IVP associated to the equation  \eqref{fNLS} with $W\equiv 0$ such that
\begin{equation}
\label{class2m}
u_1,\,u_2\in C([0,T]:H^{s}(\mathbb R^n))\cap C^1([0,T]:H^{s-\alpha}(\mathbb R^n)),
\end{equation}
with $s>\max\{\alpha; n/2\}$. If there exits a non-empty open set $\Omega\subset \mathbb R^n\times[0,T]$ such that
\begin{equation}
\label{con2m}
u_1(x,t)=u_2(x,t),\;\;\;\;\;(x,t)\in \Omega,
\end{equation}
then $\,u_1(x,t)=u_2(x,t)$ for all $(x,t)\in\mathbb R^n\times [0,T]$.

\vskip.1in

(ii) For the general form of  the equation \eqref{fNLS} the result in (i) still holds if one assumes that $u_2(x,t)=0$ for $(x,t)\in \mathbb R^n\times [0,T]$.
\end{theorem}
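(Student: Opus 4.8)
The plan is to reduce both parts to Theorem B, applied on a single fixed time slice, and then to propagate the resulting identity in time using uniqueness for the IVP associated to \eqref{fNLS} with $m=0$. Note that here $\mathcal L_0=-\Delta$, so $(\mathcal L_0)^{\alpha/2}=(-\Delta)^{\alpha/2}$ is exactly the operator covered by Theorem B for $\alpha\in(0,2)$.

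For part (i), set $w=u_1-u_2$. Since $W\equiv0$, subtracting the two instances of \eqref{fNLS} gives, in $\mathcal D'(\mathbb R^n\times(0,T))$,
\[
i\partial_t w+(-\Delta)^{\alpha/2}w+Vw+\bigl(P(u_1,\bar u_1)-P(u_2,\bar u_2)\bigr)=0 ,
\]
an identity meaningful term by term because $s>\max\{\alpha;n/2\}$: condition \eqref{class2m} puts $\partial_t w$ and $(-\Delta)^{\alpha/2}w$ in $C([0,T]:H^{s-\alpha}(\mathbb R^n))$, the algebra property of $H^s$ handles $P(u_j,\bar u_j)$, and $u_j$ is bounded and continuous for the potential term. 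By \eqref{con2m}, $w$ and hence all of its distributional derivatives vanish on the open set $\Omega$; in particular $\partial_t w\equiv0$ on $\Omega$, while $Vw$ and $P(u_1,\bar u_1)-P(u_2,\bar u_2)$ also vanish on $\Omega$ since $u_1=u_2$ there. Hence $(-\Delta)^{\alpha/2}w\equiv0$ in $\mathcal D'(\Omega)$.

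Next I would choose $(x_0,t_0)\in\Omega$ with $t_0\in(0,T)$ and $r,\delta>0$ with $B(x_0,r)\times(t_0-\delta,t_0+\delta)\subset\Omega$. Freezing $t=t_0$ — legitimate since $w$ and $(-\Delta)^{\alpha/2}w$ are continuous in $t$ with values in Sobolev spaces — one gets $w(\cdot,t_0)\in H^s(\mathbb R^n)$ with
\[
(-\Delta)^{\alpha/2}w(\cdot,t_0)=w(\cdot,t_0)=0\qquad\text{in }\mathcal D'(B(x_0,r)).
\]
Since $\alpha\in(0,2)$, Theorem B gives $w(\cdot,t_0)\equiv0$, i.e. $u_1(\cdot,t_0)=u_2(\cdot,t_0)$ on $\mathbb R^n$. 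As both solutions lie in the class \eqref{class2m} on all of $[0,T]$, uniqueness for the IVP \eqref{fNLS} with $W\equiv0$, applied forward and backward in time from $t_0$ (the equation being time reversible for a real potential and polynomial-type $P$), yields $u_1\equiv u_2$ on $\mathbb R^n\times[0,T]$. For part (ii) one takes $w=u_1$ (recall $u_2\equiv0$); then $w\equiv0$ on $\Omega$ by \eqref{con2m}, and on $\Omega$ one has $\partial_t u_1\equiv0$, the terms $Vu_1$ and $\bigl(W\ast F(|u_1|)\bigr)u_1$ vanish because each is a pointwise multiple of $u_1$, and $P(u_1,\bar u_1)=P(0,0)=0$; so again $(-\Delta)^{\alpha/2}u_1\equiv0$ in $\mathcal D'(\Omega)$ and the argument above gives $u_1\equiv0$ on $\mathbb R^n\times[0,T]$.

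The one step that genuinely cannot be relaxed is the hypothesis $u_2\equiv0$ in part (ii): for two general solutions agreeing only on $\Omega$, the difference of Hartree terms equals $\bigl(W\ast(F(|u_1|)-F(|u_2|))\bigr)u_1$ on $\Omega$, and since $F(|u_1|)-F(|u_2|)$ need not vanish away from $\Omega$, this convolution need not vanish on $\Omega$, so the reduction to $(-\Delta)^{\alpha/2}w\equiv0$ on $\Omega$ breaks down — this is the main obstacle, and it is precisely why the theorem is stated as it is. Everything else is routine: the regularity bookkeeping needed to restrict the equation to a time slice is supplied by $s>\max\{\alpha;n/2\}$, and the time propagation is the uniqueness statement already contained in the well-posedness theory cited for \eqref{fNLS}, so the analytic core of the theorem is carried entirely by Theorem B.
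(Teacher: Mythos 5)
Your argument is correct and is essentially the paper's own (omitted) proof: the paper proves Theorem \ref{A2m} by the same reduction used for Theorems \ref{A1} and \ref{A2}(ii), namely that the equation together with the vanishing of $w=u_1-u_2$ (resp.\ $u_1$) on the open set $\Omega$ forces $(-\Delta)^{\alpha/2}w=w=0$ on a time slice $\Theta\times\{t_0\}$, after which Theorem B gives $w(\cdot,t_0)\equiv 0$ and uniqueness in the class \eqref{class2m} propagates this to all of $[0,T]$. Your explicit remarks on the time-slicing, the forward/backward uniqueness step, and the obstruction caused by the Hartree term match what the paper leaves implicit (cf.\ Remark \ref{112}(iii)).
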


\begin{remark}
\label{112}

(i) In Theorems \ref{A2} -\ref{A2m} we are assuming the existence of solutions of the IVP \eqref{fNLS}  in the class described in \eqref{class2} and \eqref{class2m}. This depends on  the dimension $n$, on  the values of $m\geq 0$ and $\alpha\in(0,2)$, on the regularity and decay properties of $V$ and $W$, and on the structure of the terms $F(\cdot)$ and $P(\cdot,\cdot)$. In particular, the hypothesis  $s>n/2$ guarantees that $H^s(\mathbb R^n)$ 
is an algebra respect to the usual product of functions so the polynomial $P(u,\overline u)$ belongs to $L^2(\mathbb R^n)$ at each $t\in[0,T]$
\vskip.1in

(ii) The results in Theorem \ref{A2m} ($m =0$) extend to the case $ \alpha\in (-n/2,0)-\mathbb Z$ by assuming (instead of \eqref{class2m}) that
$$
u_1, u_2\in C([0,T]:H^{}(\mathbb R^n))\cap C^1([0,T]:L^p(\mathbb R^n)),
$$
with $s>n/2$ and $1/p=1/2+\alpha/2.$

\vskip.1in 
(iii) As  was mentioned before, in part (ii) of Theorems \ref{A2} and \ref{A2m} we  achieve a weaker version of the desired unique continuation result. Roughly speaking, in the general case 
one works with the equation solved by $w(x,t)=(u_1-u_2)(x,t)$. However,  when the non-local Hartree term in \eqref{fNLS} is present, the equation for $w(x,t)$ does not satisfy the necessary property  enjoyed by  a solution  $v$ of \eqref{fNLS}, that is, for any $\Omega\subset \mathbb R^n\times [0,T])$ open set
$$
\text{if}\,\;v(x,t)=0,\,\,(x,t)\in\Omega,\,\,\text{then}\;\,(\mathcal L_m)^{\alpha/2}v(x,t)=0,\,\,(x,t)\in\Omega.
$$
\vskip.1in
(iv) The comments in Remark \ref{111} part (iii) concerning more general nonlinearities  in \eqref{fNLS} also apply to Theorems \ref{A2} and \ref{A2m}.
\vskip.1in

\end{remark}

Next, we consider the extension of Corollary \ref{A17} to solutions of the IVP associated to the equation in \eqref{fNLS}.

\begin{corollary}\label{A18}
 Let $\,\alpha\in (-2,2)-\{0\}$ if  $m>0$ and  $\,\alpha\in (0,2)-\{0\}$ if $m=0$. Let $\,u $ be a solution of the IVP associated  to the equation \eqref{fNLS} with  $V\equiv W\equiv 0$ such that 
\begin{equation}\label{excep2}
u\in C([0,T]:H^{s}(\mathbb R^n))\cap C^1([0,T]:H^{s-\alpha}(\mathbb R^n)).
\end{equation} 
If there exit a constant $c_0\in \mathbb R$ and  a non-empty open set $\Omega\subset R\times[0,T]$ such that
\begin{equation}
\label{con2&}
u(x,t)=c_0,\;\;\;\;\;(x,t)\in \Omega,
\end{equation}
then $c_0=0$ and $\,u_1(x,t)=0$ for all $(x,t)\in\mathbb R\times [0,T]$.  


\end{corollary}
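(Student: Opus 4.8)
The plan is to imitate the proof of Corollary \ref{A17}: differentiate $u$ in the spatial variables in order to eliminate the constant $c_0$ (and the constant $P(c_0,c_0)$ produced by the equation), apply the unique continuation principle for $(\mathcal L_m)^{\alpha/2}$ --- Theorem B when $m=0$, Theorem \ref{C1} when $m>0$, together with the extensions of their ranges mentioned in Remark \ref{rem1} and in the remark after Theorem \ref{C1} --- to conclude that $\partial_{x_j}u$ vanishes identically, and then invoke Theorem \ref{A2} or Theorem \ref{A2m} with $u_2\equiv 0$.

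Concretely, after shrinking $\Omega$ we may fix a nonempty open ball $\Theta\subset\mathbb R^n$ and a relatively open interval $I\subset[0,T]$ with $\Theta\times I\subset\Omega$, so $u\equiv c_0$ on $\Theta\times I$ and hence $\partial_tu$ and all spatial derivatives of $u$ vanish there. Since $V\equiv W\equiv 0$, evaluating \eqref{fNLS} on $\Theta\times I$ gives, as an identity in $\mathcal D'(\Theta)$ for each $t\in I$,
\begin{equation*}
(\mathcal L_m)^{\alpha/2}u(\cdot,t)=-\,i\,\partial_tu(\cdot,t)-P(u,\bar u)(\cdot,t)=-\,P(c_0,c_0),
\end{equation*}
a constant in $x$. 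Therefore, for each $j\in\{1,\dots,n\}$ and each $t\in I$ the function $f=\partial_{x_j}u(\cdot,t)$, which belongs to $H^{s-1}(\mathbb R^n)$, satisfies $f=0$ and $(\mathcal L_m)^{\alpha/2}f=\partial_{x_j}\big((\mathcal L_m)^{\alpha/2}u(\cdot,t)\big)=0$ in $\mathcal D'(\Theta)$, using that $\partial_{x_j}$ commutes with the Fourier multiplier $(\mathcal L_m)^{\alpha/2}$.

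Next I apply the relevant unique continuation statement to $f$. If $m=0$ (so $\alpha\in(0,2)$) this is Theorem B. If $m>0$, the rescaling $x\mapsto x/m$ turns $(\mathcal L_m)^{\alpha/2}=m^{\alpha}(1-\Delta/m^2)^{\alpha/2}$ into $m^{\alpha}(1-\Delta)^{\alpha/2}$, so the hypotheses take the form \eqref{hyp2} and Theorem \ref{C1} --- whose range extends to $\mathbb R-2\mathbb Z$, in particular to all $\alpha\in(-2,2)-\{0\}$ --- applies. In both cases $f\equiv 0$ on $\mathbb R^n$, i.e. $\partial_{x_j}u(\cdot,t)\equiv 0$ for all $j$ and all $t\in I$; thus $u(\cdot,t)$ is a constant function, and since $u(\cdot,t)\in H^s(\mathbb R^n)\subset L^2(\mathbb R^n)$ this constant must be $0$. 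Hence $u\equiv 0$ on $\mathbb R^n\times I$, and in particular $c_0=0$.

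Finally, with $c_0=0$ the function $u_2\equiv 0$ solves \eqref{fNLS} (here $P(0,0)=0$ and $V\equiv W\equiv 0$) and agrees with $u$ on the nonempty open set $\Omega$; Theorem \ref{A2}(i) when $m>0$, respectively Theorem \ref{A2m}(i) when $m=0$, then yields $u\equiv 0$ on $\mathbb R^n\times[0,T]$. I do not expect a serious obstacle here: all the hard analysis is already contained in Theorems B, \ref{C1}, \ref{A2} and \ref{A2m}, and the only idea proper to this corollary is to pass to $\partial_{x_j}u$ so as to kill $c_0$ and $P(c_0,c_0)$ simultaneously. The delicate points are purely bookkeeping --- justifying the differentiation at low Sobolev regularity and the reduction to an identity in $\mathcal D'(\Theta)$, and checking that the relevant value of $\alpha$ (especially negative $\alpha$ when $m>0$, where one relies on the boundedness of Bessel potentials rather than on the $L^1\cap L^2$ device needed for Riesz potentials) lies in the extended ranges of Theorem B and Theorem \ref{C1}.
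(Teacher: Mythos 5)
Your argument is correct and is exactly the proof the paper has in mind: it omits the details of Corollary \ref{A18}, declaring it analogous to Corollary \ref{A17} and Theorem \ref{A2}(ii), and your differentiation of $u$ in $x$ to eliminate $c_0$ and $P(c_0,c_0)$, followed by Theorem B (for $m=0$) or the extended Theorem \ref{C1} (for $m>0$, via the rescaling to $(1-\Delta)^{\alpha/2}$) and then Theorem \ref{A2}(i)/\ref{A2m}(i) with $u_2\equiv 0$, is precisely that omitted argument spelled out. Your remarks on the negative-$\alpha$ range for $m>0$ and on Bessel versus Riesz potentials are consistent with Remark \ref{rem1} and the remark following Theorem \ref{C1}.
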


\begin{remark} The observations in Remark \ref{112} (ii) also  hold for  Corollary \ref{A18} in the case $m=0$.

\end{remark}

Next,  we shall give an application of Theorem B to a non-local non-dispersive model. We consider the $2$-D quasi-geostrophic (QG) equation
\begin{equation}
\label{QG} 
\partial_t\theta+u\cdot \nabla\theta=0,\;\;\;\;\;\;\;(x,t)\in\mathbb R^2\times\mathbb R,
\end{equation}
 and
\begin{equation}
\label{qg1}
u=(u_1,u_2)=\nabla^{\perp}(-\Delta)^{-1/2}\theta=(-\partial_y,\partial_x)(-\Delta)^{-1/2}\theta,
\end{equation}
where $\theta$ denotes a scalar (potential temperature) convected by the velocity field $u$.

The QG was derived in \cite{Pe} as a model for atmospheric turbulence. Its relation with the Euler system for incompressible (homogeneous) flow 
$$
\begin{aligned}
\begin{cases}
&\partial_tv+v\cdot \nabla v=\nabla p,\;\;\;k=1,..,n,\;\;n=2,3,\\
&\nabla\cdot v=0,
\end{cases}
\end{aligned}
$$
where $v=(v_1,..,v_n)$ is the velocity and $p$ denotes the pressure, and its vorticity $\omega =\nabla\times v$ formulation in $3$-D
\begin{equation}
\label{Vor3}
\partial_t\omega+v \cdot \nabla \omega=(Dv)\omega,
\end{equation}
has been extensively studied, see \cite{CoMaTa} and \cite{CoGSIo} and references therein. Notice that by differentiating the equation in \eqref{QG} one gets
\begin{equation}
\label{abc}
\partial_t(\nabla^{\perp}\theta)+u \cdot \nabla (\nabla^{\perp}\theta)=(Du)(\nabla^{\perp}\theta),
\end{equation}
which resembles to the equation in \eqref{Vor3}.

We also observe that in the $2$-D case of the Euler system the vorticity $\omega$ is a scalar function satisfying the equation
\begin{equation}
\label{Vor2}
\partial_t \omega+v\cdot \nabla \omega=0,
\end{equation}
which resembles that  in \eqref{QG}. In both dimensions $2$ and $3$ the Biot-Savart law 
\begin{equation}
\label{BS}
v=\nabla\times (-\Delta)^{-1}\omega,
\end{equation}
allows to recover the velocity $v$ in terms of the vorticity $\omega$.

The following theorem presents a difference between the behavior of the solutions of the QG equation and those for the vorticity system in $2$ and $3$ dimensions.

\begin{theorem}\label{X}
Let $\theta$ be a  solution of the IVP associated to the equation  \eqref{QG} such that
\begin{equation}
\label{classQG}
\theta\in C([0,T]:H^{s}(\mathbb R^2))\cap C^1([0,T]:H^{s-1}(\mathbb R^2)),
\end{equation}
with $s>1$. If there exit $t_0\in[0,T]$,  $c_0\in\mathbb R$ and  a non-empty  open set $\Theta\subset \mathbb R^2$ such that
\begin{equation}
\label{conQG}
\theta(\cdot,t_0)\big |_{\Theta}=c_0\;\;\;\text{and}\;\;\;u(\cdot,t_0)\big |_{\Theta}=(0,0),
\end{equation}
then $c_0=0$, $\theta(x,t)=0$ and $u(x,t)=(0,0)$  for all $(x,t)\in\mathbb R^2\times [0,T]$.

\end{theorem}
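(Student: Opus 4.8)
\emph{Proof proposal.} The plan is to run a purely \emph{static} argument at the single time $t_0$, using only the constitutive relation \eqref{qg1} together with Theorem B for $\alpha=1$, and then to propagate the conclusion to all $t\in[0,T]$ by uniqueness for the IVP \eqref{QG}. First I would record the elementary consequences of \eqref{classQG}: since $s>1=n/2$, the function $\theta(\cdot,t_0)$ is continuous, lies in $H^s(\mathbb R^2)\subset L^2(\mathbb R^2)$, and $u(\cdot,t_0)=\nabla^\perp(-\Delta)^{-1/2}\theta(\cdot,t_0)\in H^s(\mathbb R^2)$ as well, because its two components are, up to signs, Riesz transforms of $\theta(\cdot,t_0)$, whose Fourier symbols $\pm i\xi_k/|\xi|$ are bounded (this is also where the apparent low-frequency singularity of $(-\Delta)^{-1/2}$ disappears). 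Set $f_j:=\partial_{x_j}\theta(\cdot,t_0)\in H^{s-1}(\mathbb R^2)$, $j=1,2$.

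The heart of the matter is the pair of operator identities
\begin{equation*}
(-\Delta)^{1/2}\partial_{x_1}\theta(\cdot,t_0) = -\Delta\, u_2(\cdot,t_0), \qquad (-\Delta)^{1/2}\partial_{x_2}\theta(\cdot,t_0) = \Delta\, u_1(\cdot,t_0),
\end{equation*}
which I would verify directly on the Fourier side: using \eqref{qg1}, both sides carry the symbol $(2\pi)^2\,i\,|\xi|\,\xi_k$ acting on $\widehat{\theta}(\cdot,t_0)$, and each member is a genuine tempered distribution belonging to $H^{s-2}(\mathbb R^2)$ (no $(-\Delta)^{-1/2}$ is ever formed on its own, since the first-order derivative in \eqref{qg1} kills the singular part). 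Now restrict to the open set $\Theta$. On $\Theta$ we have $\theta(\cdot,t_0)\equiv c_0$, hence $f_j=0$ in $\mathcal D'(\Theta)$; and since $u_1(\cdot,t_0)=u_2(\cdot,t_0)=0$ on the \emph{open} set $\Theta$, every distributional derivative of $u_1(\cdot,t_0)$ and $u_2(\cdot,t_0)$ vanishes there, so by the identities above $(-\Delta)^{1/2}f_j=0$ in $\mathcal D'(\Theta)$. Applying Theorem B with $n=2$, $\alpha=1$ and $f=f_j$ gives $f_j\equiv 0$, that is $\nabla\theta(\cdot,t_0)\equiv 0$ on $\mathbb R^2$.

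Consequently $\theta(\cdot,t_0)$ is a.e.\ equal to a constant on $\mathbb R^2$; being an element of $L^2(\mathbb R^2)$, that constant must be $0$, so $c_0=0$, $\theta(\cdot,t_0)\equiv 0$, and therefore $u(\cdot,t_0)=\nabla^\perp(-\Delta)^{-1/2}\theta(\cdot,t_0)\equiv(0,0)$. To conclude, I would invoke the (forward and backward in time) uniqueness of solutions to the IVP \eqref{QG} in the class \eqref{classQG}: the zero function is a solution and agrees with $\theta$ at $t=t_0$, hence $\theta\equiv 0$ and $u\equiv(0,0)$ on $\mathbb R^2\times[0,T]$.

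The routine parts are the Fourier-multiplier bookkeeping for the two identities and the observation that a function vanishing on an open set has all its distributional derivatives vanishing there. The two points needing care are: (a) checking that all the non-local objects are bona fide tempered distributions — handled by noting that $u$ is given by Calder\'on--Zygmund (Riesz-type) operators applied to $\theta$, so one works with $\partial_{x_j}\theta$ rather than with $(-\Delta)^{-1/2}\theta$, in the spirit of the remark following Corollary \ref{A17}; and (b) the time-propagation step, which rests on backward--forward uniqueness for \eqref{QG} and is precisely where the hypothesis $s>1$ (so that $H^s(\mathbb R^2)$ is an algebra and the transport nonlinearity is controlled) enters.
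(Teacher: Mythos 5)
Your proof is correct, but the key unique--continuation step is genuinely different from the paper's. The paper works directly with the pair $\bigl(\partial_y\theta(\cdot,t_0),\,u_1(\cdot,t_0)\bigr)$ (and symmetrically with $\partial_x\theta,\,u_2$), noting that $u_1=-(-\Delta)^{-1/2}\partial_y\theta$ vanishes on $\Theta$ together with $\partial_y\theta$, and then invokes the \emph{negative-order} extension of Theorem B (the case $\alpha=-1$ in dimension $n=2$, which sits at the borderline $-n/2$ of Remark \ref{rem1} and requires the results of \cite{CoMoRa}). You instead hit the velocity with $\Delta$ to produce the positive-order identities $(-\Delta)^{1/2}\partial_{x_1}\theta=-\Delta u_2$, $(-\Delta)^{1/2}\partial_{x_2}\theta=\Delta u_1$ (which check out on the Fourier side), observe that $\Delta u_j=0$ in $\mathcal D'(\Theta)$ because $u$ vanishes on the open set $\Theta$, and then apply Theorem B with $\alpha=1$ to $f_j=\partial_{x_j}\theta(\cdot,t_0)\in H^{s-1}$. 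This buys self-containedness: no appeal to the $\alpha\in(-n,-n/2]$ extension or to \cite{CoMoRa}, and no extra integrability hypotheses, at the price of one extra (routine) multiplier identity. Both arguments then conclude identically: $\nabla\theta(\cdot,t_0)\equiv 0$, hence $\theta(\cdot,t_0)\equiv 0$ (you use $L^2$; the paper uses decay from $H^s$), hence $u(\cdot,t_0)=0$ by \eqref{qg1}, and propagation to all of $[0,T]$. On that last step you are explicit in invoking forward--backward uniqueness for \eqref{QG}, which the paper leaves implicit; note only that uniqueness in the class \eqref{classQG} with merely $s>1$ is delicate (the paper's own Remark on well-posedness asks $s>n/2+1=2$, and its proof even writes $s>2$), so your appeal to uniqueness inherits exactly the same caveat as the paper's statement rather than introducing a new gap.
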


\begin{remark} (i) We observe  that Theorem \ref{X} fails for the Euler system in $2$ and $3$ dimensions by just picking up  a data $v(\vec x, 0)=\vec c_0$ for  $\vec x\in \Theta$, (since $\omega(\vec x,0)=\nabla\times v(\vec x,0)$), and using that the equations are time reversible. As it was shown above in the case $n=3$ one can think of $\nabla^{\perp}\theta$ as the vorticity and in the case $n=2$ one sees $\theta$ as the vorticity itself.

(ii) The proof of the  existence of solution (in fact,  the local well-posedness in $=H^s(\mathbb R^n)$, with $s>n/2+1$) to the IVP associated to the equation\eqref{QG} in the class \eqref{classQG} follows a classical argument. 

(iii) Theorem \ref{X} applies to the generalized form of the QG equation considered in \cite{CoGSIo}
\begin{equation}
\label{QG1} 
\partial_t\theta+u\cdot \nabla\theta=0,\;\;\;\;\;\;\;\;u=(u_1,u_2)=\nabla^{\perp}(-\Delta)^{-1+\alpha/2}\theta,
\end{equation}
with $\alpha\in(0,2)$, where $\alpha=1$ corresponds to the QG equation and $\alpha=0$ to the vorticity equation in $2$-D \eqref{Vor2}.

\end{remark}

\vskip.1in
The rest of this paper is organized as follows : section 2 contains  the proof of Theorem \ref{C1}. The proof of Theorem \ref{A1}, Corollary \ref{A17}, Theorems  \ref{A2}-\ref{A2m}, Corollary \ref{A18} and Theorem \ref{X} will be in section 3. Section 4 (appendix) contains the proof of the last statement in Remark \ref{111} part (ii) concerning condition \eqref{excep}.

\section{Preliminary estimates}
\begin{proof} [Proof of Theorem \ref{C1}]

First note that, without loss of generality, we can assume that $f\in H^{\infty}(\mathbb R^n)$, by convolving with a compactly supported approximate identity. 

Let $\alpha\in (0,2)$ and $\Theta$ be a non-empty open sub-set of $\,\mathbb R^n$. Let $L$ be a non-negative second order elliptic operator with a dense domain $dom(L)$ in $L^2(\mathbb R^n)$. Following \cite{StTo} for $f\in dom(L)$ we define for $y\in[0,\infty)$
\begin{equation}
\label{21}
\begin{aligned}
V(x,y)&=\frac{1}{\Gamma(\alpha/2)}\,\int_0^{\infty} e^{-tL}\,(L^{\alpha/2}f)(x)\,e^{-y^2/4t}\,\frac{dt}{t^{1-\alpha/2}}\\
&=\frac{y^{\alpha}}{2^{\alpha} \Gamma(\alpha/2)}\,\int_0^{\infty} e^{-tL}f(x)\,e^{-y^2/4t}\,\frac{dt}{t^{1+\alpha/2}}\\
\end{aligned}
\end{equation}

Thus, by Theorem 1.1 in  \cite{StTo} one has 
\begin{equation}
\label{22}
V \in C^{\infty}((0,\infty): dom(L)) \cap C^{\infty}([0,\infty):L^2(\mathbb R^n)),
\end{equation}
with
 \begin{equation}
 \label{ellip1}
 \begin{aligned}
 \begin{cases}
 & - L_xV+\dfrac{1-\alpha}{y}\,\partial_yV+\partial_y^2V=0,\;\;\;\;\;\;(x,y)\in\mathbb R^n\times(0,\infty),\\
 & V(x,0)=f(x).
 \end{cases}
 \end{aligned}
 \end{equation}
 and 
\begin{equation}
\label{def7}
L^{\alpha/2}f(x)=  - c^*_{\alpha}\,\lim_{y\downarrow 0} \,y^{1-\alpha} \,\partial_yV(x,y),
\end{equation}
where $\,c^*_{\alpha}=2^{\alpha}\Gamma(\alpha/2)/\alpha\Gamma(-\alpha/2)$.

 As it was remarked in \cite{StTo} the identities above should be understood in $L^2(\mathbb R^n)$. Moreover,  $V(x,y)$  solves the singular boundary value problem \eqref{ellip1} and the first equality in \eqref{21} involves the heat semi-group $e^{-tL}$ acting on $L^{\alpha/2}f$ and the second does not involve fractional powers of $L$.  Thus,
 \eqref{def7} describes the fractional powers of $L$ in terms of the solution of the boundary value  \eqref{ellip1} which is the original idea in \cite{CaSi}.

Notice that in the case $L=-\Delta_x$ the equation in \eqref{ellip1} can be written in the divergence form
\begin{equation}
\label{ellip2}
div_{x,y}(y^{1-\alpha}\nabla_{x,y}V)=0.
\end{equation}
and that when $\alpha=1$ \eqref{ellip1} and \eqref{def7} agree with \eqref{simple} and \eqref{def1}, respectively.

 Next, we fix the operator $L=1-\Delta_x$, assume without loss of generality that $\Theta=B_1(0)=\{x\in\mathbb R^n\;:\,|x|<1\}$
 and $f\in H^{\infty}(\mathbb R^n)$ with
 \begin{equation}
 \label{hyp22}
 f\Big |_{B_1(0)}=(1-\Delta)^{\alpha}f \Big |_{B_1(0)}\equiv 0.
 \end{equation}
 In this case 
 \begin{equation}
 \label{ellip3}
 \begin{aligned}
 \begin{cases}
 & \Delta_xV-V+\dfrac{1-\alpha}{y}\,\partial_yV+\partial_y^2V=0,\;\;\;(x,y)\in \mathbb R^n\times(0,\infty),\\
 & V(x,0)=f(x),
 \end{cases}
 \end{aligned}
 \end{equation}
 with
 \begin{equation}
 \label{222}
 (1-\Delta_x)^{\alpha/2}f(x)= - c^*_{\alpha}\,\lim_{y\downarrow 0} \,y^{1-\alpha} \,\partial_yV(x,y),\;\;\;\;x\in\mathbb R^n.
\end{equation}

 We note that under our regularity assumptions and the hypothesis \eqref{hyp22}  the local infinite order of vanishing argument in \cite{Ru} (proof of Proposition 2.2) applies  to the equation \eqref{ellip3}. In particular, it tells us that the solution  $V(x,y)$ vanishes of infinite order at $(x,y)\in B_1(0)\times \{0\}$, i.e. 
 \begin{equation}
 \label{vanishing}
 \lim_{y\downarrow 0}\;\frac{V(x,y)}{y^k}=0,\;\;\;\;\;\forall \,x\in B_1(0)\;\;\forall k\in\mathbb N.
 \end{equation}

Now we define the function $\tilde V(x,t,y)$ as
\begin{equation}\
\label{key}
\tilde V(x,t,y)=\cos(t)\,V(x,y),\;\;\;(x,t,y)\in\mathbb R^{n+1}\times [0,\infty).
\end{equation}

Therefore,
\begin{equation}
 \label{ellip4}
 \begin{aligned}
 \begin{cases}
 & \Delta_{x,t,y}\tilde V+\dfrac{1-\alpha}{y}\,\partial_y\tilde V=0,\;(x,t,y)\in \mathbb R^{n+1}\times(0,\infty),\\
 & \tilde V(x,t,0)=\cos(t)\,f(x),
 \end{cases}
 \end{aligned}
 \end{equation}
with
\begin{equation}
 \label{vanishing2}
 \lim_{y\downarrow 0}\;\frac{\tilde V(x,t, y)}{y^k}=0,\;\;\;\;\;\forall \,(x,t)\in B_1(0)\times(-1,1)\;\;\forall k\in\mathbb N.
 \end{equation}

Thus, we are in the setting of the proof of Theorem B. Applying Proposition 2.2. in \cite{Ru} one gets that 
$$\tilde V(x,t,y)=0\;\;\;\text{for}\;\;(x,t,y)\in\{(x,t,y)\,:\;|x|^2 +t^2+y^2<1,\;y>0\}.
$$
But $\tilde V$ is real analytic in $\mathbb R^{n+1}\times (\epsilon,\infty)$ for any $\epsilon>0$. Therefore $\tilde V\equiv 0$  and so $V\equiv 0$ and consequently $f(x)=0$ for any $x\in\mathbb R$.

 This completes the proof for the case $\alpha\in (0,2)$.

Next, for $\alpha\in (0,\infty)-2\mathbb Z$ and $f\in H^{s}(\mathbb R^n)$, $s\in\mathbb R$, we first reduce it to the case $f\in H^{\infty}(\mathbb R^n)$. Thus, applying the result for  $\alpha\in (0,2)$  to
$$
\tilde f(x)=(1-\Delta)^{k}f(x),\;\;\;\;\;\;\text{with}\;\;\;\;\;k\in \mathbb N,\;\;\text{and}\;\;\;\alpha\in (2k,2(k+1)),
$$
the desired result follows.

Finally, if $\alpha\in (-\infty,0)-2\mathbb Z$ we define
$$
g(x)=(1-\Delta)^{\alpha/2}f(x).
$$

Thus, from hypothesis \eqref{hyp2} one has that
$$
g(x)=0 \;\;\;\;\text{and}\;\;\;\;(1-\Delta)^{-\alpha/2} g(x)=f(x)=0,\;\;\;\;x\in\Theta,
$$
with $-\alpha\in (0,\infty)-2\mathbb Z$ and $g\in H^{-\alpha}(\mathbb R^n)$. Therefore, from the previous case one has that $g(x)=0$ for $x\in\mathbb R^n$ and consequently, $f(x)=0$ for $x\in\mathbb R^n$.

\end{proof}
\section{Proof of Theorems \ref{A1}-\ref{A2}-\ref{A2m}-\ref{X} and Corollaries \ref{A17}-\ref{A18}}
\begin{proof} [Proof of Theorem \ref{A1}]

 We define the function 
$$
w(x,t)=(u_1-u_2)(x,t)
$$
which satisfies the equation
\begin{equation}\label{AAA}
\partial_tw-\partial_xD_x^{\alpha}w +\partial_xu_1 w+u_2\partial_xw=0, \, \;x, t\in\R, \;\alpha\in (-1,2)-\{0\}.
\end{equation}
From the hypothesis \eqref{con1} and the equation \eqref{AAA} it follows that
\begin{equation}
\label{AB1}
w(x,t)=D_x^{\alpha} \partial_x w(x,t) =0,\;\;(x,t)\in\Omega\subseteq \mathbb R\times [0,T].
\end{equation}
Hence, there exist  $\,t_0\in(0,T)$ and an open non-empty interval $(a,b)$ such that $\,(a,b)\times\{t_0\}\subset \Omega$.

First, we observe by he Hardy-Littlewood-Sobolev inequality that if $\alpha\in (-1/2,2)$ and $f(x)=\partial_x w(x,t_0)$, then $D_x^{\alpha}f$ is a well defined function in $L^p(\mathbb R)$ with $p=2$ if $\alpha\in[0,2)$ and with $p$ such that $1/p=1/2+\alpha$ if $\alpha\in(-1/2,0)$.

Therefore, applying Theorem B and Remark \ref{rem1} to the function $\,\partial_xw(x,t_0)\,$ one obtains that $\,\partial_xw(x,t_0)=0$ for any $\,x\in\mathbb R$ and consequently $w(x,t_0)=0$ for any $x\in \mathbb R$  which yields the desired result in the case $\alpha\in (-1/2,2)$.

Next, we consider the case $\alpha\in(-1,-1/2]$. By the assumptions \eqref{class1} and  \eqref{excep} it follows that if $f(x)=\partial_xw(x,t_0)$, then $f\in L^1(\mathbb R)\cap L^2(\mathbb R)$. Thus, $D_x^{\alpha}f\in L^q(\mathbb R)$ for any $q\in (1/(1+\alpha), 2/(1+2\alpha))$. 

Hence, applying Theorem B and Remark \ref{rem1} to the function $\,\partial_xw(x,t_0)\,$ one gets that $\,w(x,t_0)=0$ for any $\,x\in\mathbb R$, which completes the proof.

\end{proof}

\begin{proof} [Proof of Corollary \ref{A17}]

By applying the argument in the proof of Theorem \ref{A1} to the equation \eqref{AA} it follows that $f(x)=\partial_xu(x,t_0)=0$ for all $x\in\mathbb R^n$. Since $\partial_xu(\cdot ,t_0)\in H^1(\mathbb R)$ one gets the desired result.

\end{proof}

The proofs of  Theorems \ref{A2} - \ref{A2m} part (i) are similar, so they will be omitted. 

\vskip.1in

\begin{proof} [Proof of Theorem \ref{A2} (ii)] In this case we work directly with the general form of the equation in \eqref{fNLS}. By hypotheses $u_2\equiv 0$  and from the equation \eqref{fNLS} it follows that in $\Omega\subset \R^n\times[0,T]$
\begin{equation}
\label{z1}
\begin{aligned}
u_1(x,t)&=(\mathcal L_m)^{\alpha/2}u_1(x,t)\\
&= (-\Delta+m^2)^{\alpha/2}u_1(x,t)=0,\;\;\,(x,t)\in \Omega.
\end{aligned}
\end{equation}
 Thus, there exists  $\,t_0\in(0,T)$ and an open (non-empty) set $\Theta\subset \mathbb R^n$ such that $\,\Theta\times\{t_0\}\subset \Omega$.

Hence, applying Theorem B if $m=0$ and Theorem \ref{C1} if $m>0$ to the function $\,u_1(x,t_0)\,$ one obtains that $\,u_1(x,t_0)=0$ for any $\,x\in\mathbb R^n$, which yields the desired result.

\end{proof}

The proof of Theorem  \ref{A2m} (ii) is similar to those described above, so it will be omitted. This is also the case of Corollary \ref{A18}.

\begin{proof} [Proof of Theorem \ref{X}] By hypothesis for any $(x,y)\in \Theta$
$$
u_1(x,y,t_0)=-(-\Delta)^{-1/2}\partial_y\theta(x,y,t_0)=0\;\;\;\text{and} \;\;\;\;\partial_y\theta(x,y,t_0)=0.
$$
Therefore, using the extension of Theorem B with $\alpha=-1/2$, see \cite {CoMoRa} and Remark \ref{rem1}, one has that
$$
\partial_y\theta(x,y,t_0)=0 \;\;\;\;\;\;(x,y)\in\mathbb R^2.
$$
Similarly, one gets that 
$$
\partial_x\theta(x,y,t_0)=0\;\;\;\;\;\;(x,y)\in\mathbb R^2.
$$
Since $\theta \in H^s(\mathbb R^2)$, $s>2,$ it follows that 
$$
\theta(x,y,t_0)=0\;\;\;\;\;\;(x,y)\in\mathbb R^2.
$$
Finally, using \eqref{qg1} we complete the proof.
\end{proof}
\vskip.1in 
\section{Appendix}

For $\alpha \in (-1,-1/2]$ we shall prove that if 
\begin{equation}
\label{app}
u\in C([0,T]:H^s(\mathbb R))\cap C^1([0,T]:H^{s-1}(\mathbb R))\;\;\;\text{with}\;\;s>3/2
\end{equation}
is a strong solution of the IVP associated to the equation \eqref{AA} and $x\partial_xu(x,0)\in L^2(\mathbb R)$ then
\begin{equation}
\label{desired}
x\partial_xu\in L^{\infty}([0,T]:L^2(\mathbb R)).
\end{equation}

First, we observe that if $f\in H^1(\mathbb R)$, then
\begin{equation}
\label{zz0}
x\partial_xD^{\alpha}(\partial_xf)= - (1+\alpha)D^{\alpha} (\partial_xf)+\partial_xD^{\alpha}(x\partial_xf).
\end{equation}

Therefore taking derivative of the equation \eqref{AA}, multiplying the result by $x^2\partial_xu$ and integration in the $x$-variable one gets
\begin{equation}
\label{energy}
\begin{aligned}
&\frac{1}{2} \frac{d\;}{dt}\int(x\partial_xu)^2(x,t)dx\\
&=\int x\partial_xD^{\alpha}\partial_x u\,x \partial_xu dx-\int x\partial_x(u\partial_xu)x\partial_xudx\\
&=E_1(t)+E_2(t).
\end{aligned}
\end{equation}

Integration by parts leads to the estimate
\begin{equation}
\label{zz1}
\begin{aligned}
|E_2(t)|&\leq c\| \partial_xu(t)\|_{\infty}\int(x\partial_xu)^2(x,t)dx\\
&+c\| u(t)\|_{\infty} \|\partial_xu(t)\|_2 (\int(x\partial_xu)^2(x,t)dx)^{1/2}.
\end{aligned}
\end{equation}
Thus, using \eqref{zz0} we write
\begin{equation}
\label{zz2}
E_1(t)= - (1+\alpha)\int D^{\alpha}\partial_xu \,x \partial_xu dx +\int \partial_xD^{\alpha}(x\partial_xu) \,x\partial_xu dx.
\end{equation}
Since $\partial_x D^{\alpha} $ is skew-symmetric the last term in \eqref{zz2} vanishes. Hence,
\begin{equation}\
\label{zz3}
|E_1(t)|\leq c\|\partial_xD^{\alpha}u(t) \|_2(\int(x\partial_xu)^2(x,t)dx)^{1/2}.
\end{equation}

Finally, inserting \eqref{zz1} and \eqref{zz3} in \eqref{energy} and using the hypothesis \eqref{app} one gets the desired result \eqref{desired}.

\vspace{5mm}
\noindent\underline{\bf Acknowledgements.} C.E.K.  was supported by the NSF grant DMS-1800082, D. P. was supported by a Trond Mohn Foundation grant. L.V. was supported by an
ERCEA Advanced Grant 2014 669689 - HADE, and by the MINECO
project PGC  2018-095422-B-100.


\end{document}